\newtheorem{Thm}{Theorem}[section]
\newtheorem{Prop}[Thm]{Proposition}
\newtheorem{Lem}[Thm]{Lemma}
\numberwithin{equation}{section}
\begin{document} 

\title[$p$-Royden and $p$-harmonic boundaries of a graph]{Some results concerning the $p$-Royden and $p$-harmonic boundaries of a graph of bounded degree}

\author[M. J. Puls]{Michael J. Puls}
\address{Department of Mathematics \\
John Jay College-CUNY \\
445 West 59th Street \\
New York, NY 10019 \\
USA}
\email{mpuls@jjay.cuny.edu}

\begin{abstract}
Let $p$ be a real number greater than one and let $\Gamma$ be a connected graph of bounded degree. We show that the $p$-Royden boundary of $\Gamma$ with the $p$-harmonic boundary removed is an $F_{\sigma}$-set. We also characterize the $p$-harmonic boundary of $\Gamma$ in terms of the intersection of the extreme points of a certain subset of one-sided infinite paths in $\Gamma$. 
\end{abstract}

\keywords{$p$-Royden boundary, $p$-harmonic boundary, $p$-harmonic function, $F_{\sigma}$-set, extreme points of a path, $p$-extremal length of paths}
\subjclass[2000]{Primary: 60J50; Secondary: 43A15, 31C45}

\date{July 7, 2011}
\maketitle

\section{Introduction}\label{Introduction}
Let $\Gamma$ be a graph with vertex set $V_{\Gamma}$ and edge set $E_{\Gamma}$. We will write $V$ for $V_{\Gamma}$ and $E$ for $E_{\Gamma}$ if it is clear what graph $\Gamma$ we are working with. For $x \in V, \mbox{ deg}(x)$ will denote the number of neighbors of $x$ and $N_x$ will be the set of neighbors of $x$. A graph $\Gamma$ is said to be of {\em bounded degree} if there exists a positive integer $k$ such that $\mbox{deg}(x) \leq k$ for every $x \in V$. A path $\gamma$ in $\Gamma$ is a sequence of vertices $x_1, x_2, \dots, x_n$ where $x_{i+1} \in N_{x_i}$ for $1 \leq i \leq n-1$ and $x_i \neq x_j$ if $i \neq j$. Assume throughout this paper that all infinite paths have no self-intersections. A graph is connected if any two given vertices of the graph are joined by a path. All graphs considered in this paper will be connected, of bounded degree with no self-loops and have countably infinite number of vertices. We shall say that a subset $S$ of $V$ is connected if the subgraph of $\Gamma$ induced by $S$ is connected. The Cayley graph of a finitely generated group is an example of the type of graph that we study in this paper. By assigning length one to each edge of $\Gamma$, $V$ becomes a metric space with respect to the shortest path metric. We will denote this metric by $d( x, y)$, where $x$ and $y$ are vertices of $\Gamma$. Thus $d(x, y)$ gives the length of the shortest path joining the vertices $x$ and $y$. Finally, if $x \in V$ and $n \in \mathbb{N}$, then $B_n(x)$ will denote the metric ball that contains all elements of $V$ that have distance less than $n$ from $x$.

Let $p$ be a real number greater than one. In Section \ref{definepharm} we will define the $p$-Royden boundary of $\Gamma$, which we will indicate by $R_p(\Gamma)$. We will also define the $p$-harmonic boundary of $\Gamma$, which is a subset of $R_p(\Gamma)$. We will use $\partial_p(\Gamma)$ to denote the $p$-harmonic boundary. Our motivation for investigating the $p$-harmonic boundary of a graph is its connection to the vanishing of the first reduced $\ell^p$-cohomology space of a finitely generated group. More specifically, this space vanishes if and only if the $p$-harmonic boundary of the group is empty or contains exactly one element, see \cite[Section 7]{PulsPMJ} for the details of this fact. Gromov conjectured in \cite[page 150]{Gromov} that the first reduced $\ell^p$-cohomology  space of a finitely generated amenable group vanishes. Thus, a better understanding of the $p$-harmonic boundary could be helpful in resolving Gromov's conjecture. 

Recall that in a topological space a set is said to be $F_{\sigma}$ if it is a countable union of closed sets. In this paper we will prove that $R_p(\Gamma) \setminus \partial_p(\Gamma)$ is $F_{\sigma}$. For each infinite path in $\Gamma$ we can associate a set of extreme points, which is roughly the ``points at infinity'' of the path with respect to the $p$-Royden boundary. Our other main result in this paper is that the $p$-harmonic boundary is precisely the intersection of the extreme points of a certain subset of one-sided infinite paths in $\Gamma$.

The research for this paper was partially supported by PSC-CUNY grant 63873-00 41 and I would like to thank them for their support.

\section{The $p$-Royden and $p$-harmonic boundaries}\label{definepharm}
Let $1 < p \in \mathbb{R}$. In this section we construct the $p$-Royden and $p$-harmonic boundaries of $\Gamma$. For a more detailed discussion about this construction see Section 2.1 of \cite{PulsPMJ}. Before we can give these definitions we need to define the space of $p$-Dirichlet finite functions on $V$. For any $S \subset V$, the outer boundary $\partial S$ of $S$ is the set of vertices in $V\setminus S$ with at least one neighbor in $S$. For a real-valued function $f$ on $S \cup \partial S$ we define the $p$-th power of the {\em gradient}, the {\em $p$-Dirichlet sum}, and the {\em $p$-Laplacian} of $x \in S$ by
\begin{equation*}
\begin{split}
 \vert Df (x) \vert^p  & = \sum_{y \in N_x} \vert f(y) - f(x) \vert^p,   \\
 I_p (f, S)            & = \sum_{x \in S} \vert Df (x) \vert^p, \\
  \Delta_p f (x)    & = \sum_{y \in N_x} \vert f(y) - f(x) \vert^{p-2} (f(y) - f(x)).
\end{split}
\end{equation*} 
In the case $1 < p < 2$, we make the convention that $\vert f(y) - f(x) \vert^{p-2} (f(y) - f(x)) = 0$ if $f(y) = f(x)$. Let $S \subseteq V$. A function $f$ is said to be $p$-harmonic on $S$ if $\Delta_p f(x) = 0$ for all $x \in S$. We shall say that $f$ is {\em $p$-Dirichlet finite} if $I_p(f,V) < \infty$. The set of all $p$-Dirichlet finite functions on $\Gamma$ will be denoted by $D_p(\Gamma)$. With respect to the following norm $D_p(\Gamma)$ is a reflexive Banach space,
$$ \parallel f \parallel_{D_p} = \left( I_p(f,V) + \vert f(o) \vert^p \right)^{1/p},$$
where $o$ is a fixed vertex of $\Gamma$ and $f \in D_p(\Gamma)$. We use $HD_p(\Gamma)$ to represent the set of $p$-harmonic functions on $V$ that are contained in $D_p(\Gamma)$. Let $\ell^{\infty}(\Gamma)$ denote the set of bounded functions on $V$ and let $\parallel f \parallel_{\infty} = \sup_V \vert f \vert$ for $f \in \ell^{\infty}(\Gamma)$. Set $BD_p(\Gamma) = D_p(\Gamma) \cap \ell^{\infty}(\Gamma)$. The set $BD_p(\Gamma)$ is a Banach space under the norm 
$$\parallel f \parallel_{BD_p} = \left( I_p(f, V)\right)^{1/p} + \parallel f \parallel_{\infty},$$
where $f \in BD_p(\Gamma)$. Let $BHD_p(\Gamma)$ be the set of bounded $p$-harmonic functions contained in $D_p(\Gamma)$. The space $BD_p(\Gamma)$ is also closed under the usual operations of scalar multiplication, addition and pointwise multiplication. Furthermore, $\parallel fg \parallel_{BD_p} \leq \parallel f \parallel_{BD_p} \parallel g \parallel_{BD_p}$ for $f, g \in BD_p(\Gamma)$. Thus $BD_p(\Gamma)$ is a commutative Banach algebra. Let $C_c(\Gamma)$ be the set of functions on $V$ with finite support. Indicate the closure of $C_c(\Gamma)$ in $D_p(\Gamma)$ by $\overline{C_c(\Gamma)}_{D_p}$. Set $B(\overline{C_c(\Gamma)}_{D_p}) = \overline{C_c(\Gamma)}_{D_p} \cap \ell^{\infty}(\Gamma)$. Using the fact that the inequality $(a+b)^{1/p} \leq a^{1/p} + b^{1/p}$ is true when $a,b \geq 0$ and $1 < p \in \mathbb{R}$, we see immediately that $\parallel f \parallel_{D_p} \leq \parallel f \parallel_{BD_p}$. It now follows that  $B(\overline{C_c(\Gamma)}_{D_p})$ is closed in $BD_p(\Gamma)$. 

Let $Sp(BD_p(\Gamma))$ denote the set of complex-valued characters on $BD_p(\Gamma)$, that is the nonzero ring homomorphisms from $BD_p(\Gamma)$ to $\mathbb{C}$. Then with respect to the weak $\ast$-topology, $Sp(BD_p(\Gamma))$ is a compact Hausdorff space. Given a topological space $X$, let $C(X)$ denote the ring of continuous functions on $X$ endowed with the sup-norm. The Gelfand transform defined by $\hat{f}(\chi) = \chi(f)$ yields a monomorphism of Banach algebras from $BD_p(\Gamma)$ into $C(Sp(BD_p(\Gamma)))$ with dense image. Furthermore the map $i \colon V \rightarrow Sp(BD_p(\Gamma))$ given by $(i(x))(f) = f(x)$ is an injection, and $i(V)$ is an open dense subset of $Sp(BD_p(\Gamma))$. For the rest of this paper, we shall write $f$ for $\hat{f}$, where $f \in BD_p(\Gamma)$. The {\em $p$-Royden boundary} of $\Gamma$, which we shall denote by $R_p(\Gamma)$, is the compact set $Sp(BD_p(\Gamma))\setminus i(V)$. The {\em $p$-harmonic boundary} of $\Gamma$ is the following subset of $R_p(\Gamma)$:
\[  \partial_p(\Gamma) \colon = \{ \chi \in R_p(\Gamma) \mid \hat{f}(\chi) = 0 \mbox{ for all }f \in B(\overline{C_c(\Gamma)}_{D_p}) \}. \]

Let $S$ be an infinite subset of $V$ and let $A$ and $B$ be disjoint nonempty subsets of $S \cup \partial S$. The {\em $p$-capacity} of the condenser $(A, B, S)$ is defined by 
\[ cap_p(A, B, S) = \inf_u I_p(u), \]
where the infimum is taken over all functions $u \in D_p(\Gamma)$ with $u = 0$ on $A$ and $u=1$ on $B$. Such a function is called {\em admissible}. Set $cap_p(A, B, S) = \infty$ if the set of admissible functions is empty.

Let $A$ be a finite subset of $S \cup \partial S$ and let $(U_n)$ be an exhaustion of $V$ by finite connected subsets such that $A \subset U_1$. We now define
\[ cap_p(A, \infty, S) = \lim_{n \rightarrow \infty} cap_p(A, (\partial S \cup S) \setminus U_n, S). \]
Since $cap_p(A, (\partial S \cup S)\setminus U_n, S) \geq cap_p(A, (\partial S \cup S)\setminus U_{n+1}, S)$, the above limit exists. We shall say that $S$ is {\em $p$-hyperbolic} if there exists a finite subset $A$ of $S \cup \partial S$ that satisfies $cap_p(A, \infty, S) > 0.$ If $S$ is not $p$-hyperbolic, then it is said to be { \em $p$-parabolic}. An equivalent definition of $p$-parabolic is that $S$ is $p$-parabolic if and only if $1_S \in \overline{C_c(\Gamma_S)}_{D_p}$, where $1_S$ is the constant function 1 on $S$ and $\Gamma_S$ the subgraph of $\Gamma$ induced by $S$, \cite[Theorem 3.1]{Yam76}. We will define a graph $\Gamma$ to be $p$-hyperbolic ($p$-parabolic) if its vertex set $V$ is $p$-hyperbolic ($p$-parabolic). It was shown in \cite[Proposition 4.2]{PulsPMJ} that $\Gamma$ is $p$-parabolic if and only if $\partial_p(\Gamma) = \emptyset$. A useful property of $p$-hyperbolic graphs that we will use throughout this paper is the following $p$-Royden decomposition, see \cite[Theorem 4.6]{PulsPMJ} for a proof.
\begin{Thm} \label{pRoydendecomp} ($p$-Royden decomposition) Let $1 < p \in \mathbb{R}$ and suppose $f \in BD_p(\Gamma)$. Then there exists a unique $u \in B(\overline{ C_c( \Gamma)}_{D_p})$ and a unique $h \in BHD_p(\Gamma)$ such that $f = u +h$.  
\end{Thm}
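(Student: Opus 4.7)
The plan is to combine an exhaustion argument with convex minimization on finite pieces. I would fix an exhaustion $V = \bigcup_n U_n$ by finite connected subsets. On each finite set $U_n$, the strictly convex, coercive functional $u \mapsto I_p(u, U_n \cup \partial U_n)$, restricted to the affine space of functions agreeing with $f$ on $V\setminus U_n$, attains a unique minimizer $h_n$; its Euler--Lagrange equation reads $\Delta_p h_n(x) = 0$ for every $x \in U_n$, so $h_n$ is $p$-harmonic on $U_n$ and equals $f$ elsewhere. Two a priori bounds follow at once: a discrete maximum principle (interior extrema would force $\Delta_p h_n$ to have a definite sign) gives $\|h_n\|_\infty \le \|f\|_\infty$, and minimality gives $I_p(h_n) \le I_p(f)$.

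Next I would extract a limit. Since the $h_n$ are uniformly pointwise bounded, a diagonal argument yields a subsequence converging pointwise to a bounded function $h$. Each vertex $x$ lies in $U_n$ for all large $n$, so $\Delta_p h(x) = 0$ by continuity of $\Delta_p$ in the nearby vertex values, and a Fatou-type estimate gives $I_p(h) \le \liminf I_p(h_n) \le I_p(f)$. Thus $h \in BHD_p(\Gamma)$. Setting $u := f - h$, the approximants $u_n := f - h_n$ are supported on $U_n \cup \partial U_n$, hence lie in $C_c(\Gamma)$, and converge pointwise to $u$. The sequence $(u_n)$ is bounded in the reflexive Banach space $D_p(\Gamma)$, so a further subsequence converges weakly in $D_p$ to a limit that, by pointwise matching, must equal $u$. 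Since the norm-closed convex set $\overline{C_c(\Gamma)}_{D_p}$ is weakly closed, $u \in \overline{C_c(\Gamma)}_{D_p}$, and boundedness of $u$ places it in $B(\overline{C_c(\Gamma)}_{D_p})$.

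For uniqueness, I would suppose $f = u_1 + h_1 = u_2 + h_2$ and set $v := u_1 - u_2 = h_2 - h_1 \in \overline{C_c(\Gamma)}_{D_p} \cap BHD_p(\Gamma)$. Choosing $w_k \in C_c(\Gamma)$ with $w_k \to v$ in $D_p$, summation by parts gives
\[
\sum_{x \in V} w_k(x)\,\Delta_p v(x) \;=\; -\sum_{\{x,y\} \in E} |v(y)-v(x)|^{p-2}(v(y)-v(x))(w_k(y)-w_k(x)),
\]
whose left-hand side vanishes since $v$ is $p$-harmonic. H\"older's inequality with exponents $p$ and $p/(p-1)$ shows the right-hand pairing is continuous in $w_k$, so letting $k \to \infty$ yields $I_p(v) = 0$. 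Hence $v$ is constant, and since the theorem is stated in the $p$-hyperbolic setting, $\overline{C_c(\Gamma)}_{D_p}$ contains no nonzero constants, forcing $v = 0$.

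The main obstacle I anticipate is upgrading the pointwise convergence $u_n \to u$ to membership of $u$ in $\overline{C_c(\Gamma)}_{D_p}$: this uses reflexivity of $D_p(\Gamma)$ to extract a weakly convergent subsequence and then Mazur's theorem to conclude via weak closedness of the norm-closed convex set $\overline{C_c(\Gamma)}_{D_p}$. A secondary subtlety is the summation-by-parts step in uniqueness, whose passage to the limit requires H\"older control pairing the nonlinear gradient weights $|v(y)-v(x)|^{p-2}(v(y)-v(x)) \in \ell^{p/(p-1)}$ against the differences $w_k(y)-w_k(x) \in \ell^p$.
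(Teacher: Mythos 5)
Your argument is correct, and it reaches the decomposition by a genuinely different route than the one the paper relies on: the paper does not prove Theorem \ref{pRoydendecomp} at all, but quotes it from \cite{PulsPMJ} (Theorem 4.6), where the decomposition is obtained by a variational argument in the reflexive space $D_p(\Gamma)$ --- essentially minimizing the $p$-Dirichlet sum over the coset $f+\overline{C_c(\Gamma)}_{D_p}$, identifying the minimizer as the $p$-harmonic part via the Euler--Lagrange identity $\langle \Delta_p h, w\rangle =0$ for $w\in C_c(\Gamma)$, and invoking a maximum principle for boundedness. You instead solve Dirichlet problems on an exhaustion, extract a pointwise limit $h$, and recover $u=f-h$ as a weak limit of the compactly supported $u_n=f-h_n$; this is precisely the machinery the present paper uses in the proof of Lemma \ref{gequalinfinity} (finite Dirichlet problems, pointwise limits of $p$-harmonic functions, weak compactness in $D_p(\Gamma)$ together with continuity of point evaluations, Mazur/weak closedness of $\overline{C_c(\Gamma)}_{D_p}$), so your proof is self-contained within the paper's toolkit. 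What your route buys is an explicit construction with the bounds $\|h\|_\infty\le\|f\|_\infty$ and $I_p(h,V)\le I_p(f,V)$ falling out immediately; the projection-style proof is shorter functional-analytically but must argue separately that the harmonic part of a bounded function is bounded. Your uniqueness step (pairing the nonlinear gradient weights in $\ell^{p/(p-1)}$ against $\ell^p$ edge differences to get $I_p(v,V)=0$) is the same computation as Lemma 4.6 of \cite{PulsPMJ} and is fine.

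Three small points to tighten, none fatal. First, at an interior maximum $p$-harmonicity gives $\Delta_p h_n(x)\le 0$ with equality only when all neighboring values equal $h_n(x)$, so the bound $\|h_n\|_\infty\le\|f\|_\infty$ needs the standard propagation-to-the-boundary step (or, more simply, a truncation argument using minimality of $h_n$). Second, $u_n=f-h_n$ is supported on $U_n$, not $U_n\cup\partial U_n$ (harmless). Third, you correctly flag that uniqueness uses $p$-hyperbolicity (so that $\overline{C_c(\Gamma)}_{D_p}$ contains no nonzero constants); this is genuinely needed, since in the $p$-parabolic case constants can be shifted between the two summands, and it matches the paper's framing of the theorem as a property of $p$-hyperbolic graphs.
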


Let $G$ be a finitely generated group. The Cayley graph of $G$ is an example of the type of graph we study in this paper. As was mentioned in Section \ref{Introduction} the first reduced $\ell^p$-cohomology space of $G$ vanishes if and only if the cardinality of the $p$-harmonic boundary of $G$ is one or zero. The reason for this is that the first reduced $\ell^p$-cohomology space of $G$ vanishes if and only if the only $p$-harmonic functions on $G$ that are contained in $D_p(G)$ are the constants, for a proof of this see the remark after Theorem 3.5 in \cite{Puls06}. Furthermore, \cite[Theorem 2.5]{PulsPMJ} tells us that there are nonconstant $p$-harmonic functions with finite $p$-Diriclet sum on a graph of bounded degree if and only if the cardinality of the $p$-harmonic boundary of the graph is greater than one. In section 7 of \cite{PulsPMJ} the $p$-harmonic boundary is computed for several groups.

\section{Statement of main results}\label{statementresults}
In this section we will state our main results. In section \ref{prooffsigma} we will prove
\begin{Thm}\label{fsigma}
Let $1 < p \in \mathbb{R}$ and let $\Gamma$ be a graph of bounded degree. The set $R_p(\Gamma) \setminus \partial_p(\Gamma)$ is $F_{\sigma}$.
\end{Thm}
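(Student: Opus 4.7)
My plan: The strategy is to write $R_p(\Gamma) \setminus \partial_p(\Gamma)$ as a countable union of sets closed in $R_p(\Gamma)$. Unwinding the definition,
\begin{equation*}
R_p(\Gamma) \setminus \partial_p(\Gamma) = \bigcup_{f \in B(\overline{C_c(\Gamma)}_{D_p})} \bigcup_{n=1}^{\infty} \{\chi \in R_p(\Gamma) : |\hat{f}(\chi)| \geq 1/n\},
\end{equation*}
and each inner set is closed in $R_p(\Gamma)$ since $\hat{f}$ is continuous on the compact Hausdorff space $Sp(BD_p(\Gamma))$. The task is then to replace the outer union by a countable one.

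I would first verify that every $\chi \in Sp(BD_p(\Gamma))$ is continuous for the uniform norm, i.e. $|\chi(f)| \leq \|f\|_\infty$. The standard spectral radius argument applies: using the elementary estimate $|a^n - b^n| \leq n \max(|a|,|b|)^{n-1}|a-b|$ one obtains $I_p(f^n) \leq n^p \|f\|_\infty^{p(n-1)} I_p(f)$, whence $\|f^n\|_{BD_p}^{1/n} \to \|f\|_\infty$, and $|\chi(f)| = |\chi(f^n)|^{1/n} \leq \|f^n\|_{BD_p}^{1/n}$ yields the bound. Consequently, if $\{f_k\}_k \subset B(\overline{C_c(\Gamma)}_{D_p})$ is $\|\cdot\|_\infty$-dense, then for each $\chi \in R_p(\Gamma) \setminus \partial_p(\Gamma)$ one may pick $f \in B(\overline{C_c(\Gamma)}_{D_p})$ with $|\hat{f}(\chi)| = c > 0$ and some $f_k$ with $\|f-f_k\|_\infty < c/2$, giving $|\hat{f_k}(\chi)| > c/2$. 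Hence
\begin{equation*}
R_p(\Gamma) \setminus \partial_p(\Gamma) = \bigcup_{k, n} \{\chi \in R_p(\Gamma) : |\hat{f_k}(\chi)| \geq 1/n\},
\end{equation*}
a countable union of closed subsets of $R_p(\Gamma)$.

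The main obstacle is producing the $\|\cdot\|_\infty$-dense countable family inside $B(\overline{C_c(\Gamma)}_{D_p})$. Since $V$ is countable, $C_c(\Gamma)$ has countable $\mathbb{Q}$-algebraic dimension and therefore admits a countable subset $\{\phi_k\}_k$ that is $D_p$-dense in $\overline{C_c(\Gamma)}_{D_p}$. Truncating each $\phi_k$ at all positive rational levels $M$ yields countably many elements of $B(\overline{C_c(\Gamma)}_{D_p})$. The delicate step is to upgrade $D_p$-density to $\|\cdot\|_\infty$-density: this is not automatic, and the proof would exploit the bounded degree of $\Gamma$ together with the $D_p$-continuity of truncation to show that $C_c(\Gamma)$ is in fact $\|\cdot\|_{BD_p}$-dense in $B(\overline{C_c(\Gamma)}_{D_p})$, after which $\|\cdot\|_\infty$-density of the countable truncation family follows because the $BD_p$-norm dominates the sup-norm.
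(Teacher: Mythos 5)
Your opening reduction is fine: $R_p(\Gamma)\setminus\partial_p(\Gamma)=\bigcup_f\bigcup_n\{\chi\in R_p(\Gamma):\vert\hat{f}(\chi)\vert\geq 1/n\}$ with each inner set closed, and the spectral-radius estimate $\vert\chi(f)\vert\leq\Vert f\Vert_\infty$ is correct. The genuine gap is the step you yourself flag as delicate, and it cannot be filled: $C_c(\Gamma)$ is \emph{not} $\Vert\cdot\Vert_{BD_p}$-dense (nor $\Vert\cdot\Vert_\infty$-dense) in $B(\overline{C_c(\Gamma)}_{D_p})$, and worse, $B(\overline{C_c(\Gamma)}_{D_p})$ need not admit \emph{any} countable $\Vert\cdot\Vert_\infty$-dense subset. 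Since $\Vert\cdot\Vert_{BD_p}$ dominates $\Vert\cdot\Vert_\infty$, $BD_p$-density of $C_c(\Gamma)$ would force every element of $B(\overline{C_c(\Gamma)}_{D_p})$ to be a uniform limit of finitely supported functions, i.e.\ to vanish at infinity. This already fails for $p$-parabolic graphs ($1_V\in B(\overline{C_c(\Gamma)}_{D_p})$ is at sup-distance $1$ from $C_c(\Gamma)$), and it also fails in the $p$-hyperbolic case: attach a ray $\mathbb{N}$ to a vertex of the $3$-regular tree. The ray is $p$-parabolic, so for each $S\subseteq\mathbb{N}$ the function $f_S$ built from plateaus of height $1$ located at positions $4^n$, $n\in S$, with ramps of length $2^n$ (so the Dirichlet contributions are summable) lies in $B(\overline{C_c(\Gamma)}_{D_p})$, yet $\Vert f_S-f_{S'}\Vert_\infty=1$ for $S\neq S'$. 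This uncountable $1$-separated family shows the algebra is not sup-norm separable, so your countable union cannot be obtained by approximating in the sup norm, no matter which countable family you choose.

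For comparison, the paper's proof obtains countability from an entirely different source: for each $j$ it covers the compact set $R_p(\Gamma)$ by \emph{finitely many} open sets $U_{j,k}$, builds $f_{j,k}\in BD_p(\Gamma)$ equal to $1$ on $U_{j,k}$ via Urysohn and density, splits $f_{j,k}=u_{j,k}+h_{j,k}$ by the $p$-Royden decomposition, and uses a comparison principle for bounded $p$-harmonic functions to show that the sets $R_{j,k}=\{x\in R_p(\Gamma)\cap\overline{U}_{j,k}\mid\lim h_{j,k}<1\}$ exhaust $R_p(\Gamma)\setminus\partial_p(\Gamma)$, each being a countable union of compacta. If you want to keep your functional-analytic framing, you would need to produce a countable family $\{f_k\}\subseteq B(\overline{C_c(\Gamma)}_{D_p})$ with $\partial_p(\Gamma)=\bigcap_k\hat{f}_k^{-1}(0)$ by some means other than density — essentially a compactness/covering argument like the paper's — so the present proposal is not a complete proof.
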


Before we state our other main result we need to define the set of extreme points of a path in $\Gamma$. Let $P$ be the set of all one-sided infinite paths with no self-intersections in $\Gamma$. For a real-valued function $f$ on $V$ and a path $\gamma \in P$, the limit of $f$ as we follow $\gamma$ to infinity is given by $\lim_{n \rightarrow \infty} f(x_n)$, where $x_0,x_1, \dots, x_n, \dots $ is the vertex representation of the path $\gamma$. Sometimes we write $f(\gamma) = \lim_{n \rightarrow \infty} f(x_n)$ to indicate this limit. Let $\gamma \in P$ and denote by $V(\gamma)$ the set of vertices on $\gamma$. The closure of $i(V(\gamma))$ in $Sp(BD_p(\Gamma))$ will be indicated by $\overline{V}(\gamma)$. Recall that $Sp(BD_p(\Gamma))$ is endowed with the $\mbox{weak}*$-topology. Thus $\chi \in \overline{V}(\gamma)$ if and only if there exists a subsequence $(x_{n_k})$ of $(x_n)$ such that $\lim_{k \rightarrow \infty} f(x_{n_k}) = \chi(f)$ for all $f \in BD_p(\Gamma)$. The {\em extreme points} of a path $\gamma$ is defined to be 
\[ E(\gamma) = \overline{V}(\gamma) \cap R_p(\Gamma). \]
Let $f \in B(\overline{C_c(\Gamma)}_{D_p})$ and set $A_f = \{\gamma \in P \mid f(\gamma) \neq 0\}$. Set 
\[ E_f = \overline{ \{ \cup_{\gamma} E(\gamma) \mid \gamma \in P\setminus A_f \}}. \]
In Section \ref{proofboundchar} we shall prove
\begin{Thm} \label{boundchar}
Let $1 < p \in \mathbb{R}$ and let $\Gamma$ be a graph of bounded degree. Then 
$$ \partial_p(\Gamma) = \bigcap_{f \in B(\overline{C_c(\Gamma)}_{D_p})} E_f .$$
\end{Thm}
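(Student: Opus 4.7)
The plan is to prove the two inclusions $\bigcap_f E_f \subseteq \partial_p(\Gamma)$ and $\partial_p(\Gamma) \subseteq \bigcap_f E_f$ separately, writing $f$ for $\hat{f}$ as in the paper's convention.

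For $\bigcap_f E_f \subseteq \partial_p(\Gamma)$, I would take $\chi \in R_p(\Gamma) \setminus \partial_p(\Gamma)$ and choose $f \in B(\overline{C_c(\Gamma)}_{D_p})$ with $f(\chi) \neq 0$. The set $U = \{\psi \in Sp(BD_p(\Gamma)) : |f(\psi) - f(\chi)| < |f(\chi)|/2\}$ is a weak-$\ast$ open neighborhood of $\chi$ on which $|f| > |f(\chi)|/2 > 0$. For every $\gamma \in P \setminus A_f$ one has $f(\gamma) = 0$, and every $\psi \in E(\gamma)$ arises as a weak-$\ast$ subsequential limit of $(i(x_n))$, so $f(\psi)$ is a subsequential limit of $(f(x_n))$ and hence equals $0$. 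Thus $U$ is disjoint from $\bigcup_{\gamma \in P \setminus A_f} E(\gamma)$, and by openness also from its closure $E_f$, so $\chi \notin E_f$.

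For the reverse inclusion, fix $\chi \in \partial_p(\Gamma)$ and $f \in B(\overline{C_c(\Gamma)}_{D_p})$, and let $U = \{\psi : |g_j(\psi) - g_j(\chi)| < \varepsilon, \ 1 \leq j \leq k\}$ be a basic weak-$\ast$ neighborhood of $\chi$, with $g_1 = f$ so that $|f| < \varepsilon$ on $U$, using $f(\chi) = 0$. Pick a smaller open $U'$ with $\overline{U'} \subseteq U$ by normality of the compact Hausdorff space $Sp(BD_p(\Gamma))$. Since $\chi$ is a limit point of $i(V)$, the set $T = \{x \in V : i(x) \in U'\}$ is infinite, and inside $T$ one can select distinct vertices $y_k$ with $|f(y_k)| < 1/k$ and $d(o, y_k) \to \infty$. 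The heart of the argument is to construct a one-sided infinite path $\gamma = o, x_1, x_2, \ldots \in P$ by inductively joining consecutive $y_k$ via finite paths $P_k$ in $\Gamma$ that avoid previously used vertices and stay inside the sublevel set $\{x : |f(x)| < 1/k\}$. The concatenation $\gamma$ satisfies $f(x_n) \to 0$, because for any $\delta > 0$ only the finitely many $P_k$ with $k \leq 1/\delta$ can contain vertices where $|f| \geq \delta$; so $\gamma \in P \setminus A_f$, and the infinitely many $y_k$ on $\gamma$ produce a weak-$\ast$ subsequential limit lying in $\overline{U'} \cap E(\gamma) \subseteq U \cap E(\gamma)$. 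Since $U$ was arbitrary, $\chi \in E_f$.

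The principal obstacle is the inductive step of building $P_k$ inside $\{x : |f(x)| < 1/k\}$, since such sublevel sets are not in general connected subgraphs of $\Gamma$. I would handle this by refining the selection: rather than choosing $y_{k+1}$ freely in $T \cap \{|f| < 1/(k+1)\}$, restrict it to the connected component of $y_k$ in $\{|f| < 1/k\}$ after removing the finitely many previously visited vertices. The critical fact to establish is that this component still contains infinitely many elements of $T \cap \{|f| < 1/(k+1)\}$, equivalently that $\chi$ lies in the weak-$\ast$ closure of the component in $Sp(BD_p(\Gamma))$. I would argue this by contradiction: if $\chi$ were separated from the component by an open set, the $p$-Royden decomposition of Theorem~\ref{pRoydendecomp} could be used to produce a summand in $B(\overline{C_c(\Gamma)}_{D_p})$ that does not vanish at $\chi$, contradicting $\chi \in \partial_p(\Gamma)$.
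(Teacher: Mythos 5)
Your first inclusion, $\bigcap_f E_f \subseteq \partial_p(\Gamma)$, is correct and is essentially the paper's argument run in contrapositive form: pick $f \in B(\overline{C_c(\Gamma)}_{D_p})$ with $f(\chi) \neq 0$, note that every extreme point of a path in $P \setminus A_f$ is a subsequential limit of values of $f$ tending to $0$, and separate. That part stands.

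The reverse inclusion, however, is where all of the difficulty of the theorem lives, and your proposal has a genuine gap exactly there. Everything rests on the claim that the connected component of $y_k$ in $\{x : |f(x)| < 1/k\}$, after deleting the finitely many previously visited vertices, still has $\chi$ in its weak-$\ast$ closure. You do not prove this, and the sketched contradiction via Theorem \ref{pRoydendecomp} does not work as stated: (i) there is no candidate function to decompose, since the indicator of a component of a sublevel set in general has infinite $p$-Dirichlet sum and so need not lie in $BD_p(\Gamma)$; (ii) even granting a decomposition $g = u + h$ of some such function with $g(\chi)=0$, the contradiction you want requires $h(\chi) \neq 0$, and nothing forces that; (iii) most importantly, there is no mechanism ensuring that the component containing the particular vertex $y_k$ you happened to select is one that accumulates at $\chi$ --- the vertices of $U'$ may be distributed over infinitely many components of $\{|f|<1/k\}$, and $\chi$ can lie in the closure of their union without lying in the closure of any single component. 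Note that your claim is essentially a restatement of the inclusion $\partial_p(\Gamma) \subseteq E_f$ itself (a path in $P \setminus A_f$ with an extreme point near $\chi$ eventually lives inside one component of each sublevel set), so assuming it begs the question. The paper proves this direction by entirely different, potential-theoretic means: the Kayano--Yamasaki theorem that $\lambda_p(A_f) = \infty$ for $f \in \overline{C_c(\Gamma)}_{D_p}$, Lemma \ref{gequalinfinity} producing a $D_p$-function equal to $+\infty$ on a compact set missing $\partial_p(\Gamma)$, and Proposition \ref{dpmassivefinitelength} showing that $D_p$-massive sets (components of superlevel sets of the harmonic part of $f$) carry path families of finite $p$-extremal length. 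Your combinatorial path construction does not replace that machinery, so the inclusion $\partial_p(\Gamma) \subseteq E_f$ remains unproved in your write-up.
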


Let $1 < p \in \mathbb{R}$. If $\Gamma$ is $p$-parabolic, then $\partial_p(\Gamma) = \emptyset$ and Theorem \ref{fsigma} is true. Also for the $p$-parabolic case, $1_V \in B(\overline{C_c(\Gamma)}_{D_p})$ by \cite[Theorem 3.2]{Yam76}, where $1_V$ is the constant function one on $V$. Then $E_{1_V} = \emptyset$ and Theorem \ref{boundchar} follows. Thus for the rest of the paper we will assume $\Gamma$ is $p$-hyperbolic.

\section{Proof of Theorem \ref{fsigma}} \label{prooffsigma}
In this section we will prove Theorem \ref{fsigma}. We will start by giving some needed definitions and proving a comparison principle. A comparison principle for finite subsets of $V$ was proved in \cite[Theorem 3.14]{HoloSoar}. Our proof follows theirs in spirit.

Let $f$ and $h$ be elements of $BD_p(\Gamma)$ and let $1 < p \in \mathbb{R}$. Define
\[ \langle \Delta_p h, f \rangle \colon = \sum_{x \in V} \sum_{y \in N_x} \vert h(y) - h(x) \vert^{p-2} (h(y)-h(x)) (f(y) - f(x)). \]
The sum exists since 
   \[ \sum_{x \in V} \sum_{y \in N_x} \left| \vert h(y) - h(x) \vert^{p-2} (h(y) - h(x)) \right|^q = I_p(h, V) < \infty, \]
where $\frac{1}{p} + \frac{1}{q} = 1$. For notational convenience let 
\[ T(h, f, x, y) = \vert h(y) - h(x) \vert^{p-2} (h(y) - h(x))(f(y) - f(x)). \]
In order to prove Theorem \ref{fsigma} we will need the following:
\begin{Lem} \label{comparisonprinciple} 
(Comparison principle) Let $h_1, h_2$ be elements of $BHD_p(\Gamma)$ and suppose $h_1(x) \leq h_2(x)$ for all $x \in \partial_p(\Gamma)$. Then $h_1 \leq h_2$ on $V$.
\end{Lem}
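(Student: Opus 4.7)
The plan is to adapt the positive-part argument used in the finite-subset comparison principle by showing $\psi := (h_1 - h_2)^+$ lies in $B(\overline{C_c(\Gamma)}_{D_p})$, then pairing $\psi$ against $\Delta_p h_1 - \Delta_p h_2$ and exploiting the strict monotonicity of $t \mapsto \lvert t \rvert^{p-2}t$. Membership $\psi \in BD_p(\Gamma)$ is immediate from the boundedness of $h_1,h_2$ and the pointwise estimate $\lvert \psi(y) - \psi(x) \rvert \leq \lvert (h_1 - h_2)(y) - (h_1 - h_2)(x) \rvert$. The key preliminary step is to upgrade this to $\psi \in B(\overline{C_c(\Gamma)}_{D_p})$. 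For this, observe that $\hat\psi$ and $(\hat{h_1} - \hat{h_2})^+$ are continuous on $Sp(BD_p(\Gamma))$ and agree on the dense subset $i(V)$ directly from the definition $\hat{f}(i(x)) = f(x)$, hence they agree everywhere; since $\hat{h_1} \leq \hat{h_2}$ on $\partial_p(\Gamma)$ by hypothesis, $\hat\psi \equiv 0$ on $\partial_p(\Gamma)$. Combined with the characterization of $B(\overline{C_c(\Gamma)}_{D_p})$ as exactly those $f \in BD_p(\Gamma)$ whose Gelfand transform vanishes on $\partial_p(\Gamma)$ (available from the $p$-Royden decomposition framework of \cite{PulsPMJ}), this yields $\psi \in B(\overline{C_c(\Gamma)}_{D_p})$.

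Next I would compute $\langle \Delta_p h_i, \psi \rangle$. Since each $h_i$ is $p$-harmonic, summation by parts shows $\langle \Delta_p h_i, f\rangle = 0$ for every $f \in C_c(\Gamma)$, and H\"older's inequality with conjugate exponents $p,q$ yields $\lvert \langle \Delta_p h_i, f \rangle \rvert \leq I_p(h_i)^{1/q} I_p(f)^{1/p}$, extending the identity to all $f \in \overline{C_c(\Gamma)}_{D_p}$. Applying this with $f = \psi$, subtracting the two vanishing pairings, and re-indexing the double sum by unordered edges, I obtain
\begin{equation*}
\sum_{\{x,y\} \in E} \Bigl[\lvert h_1(y) - h_1(x)\rvert^{p-2}(h_1(y) - h_1(x)) - \lvert h_2(y) - h_2(x)\rvert^{p-2}(h_2(y) - h_2(x))\Bigr]\bigl(\psi(y) - \psi(x)\bigr) = 0.
\end{equation*}

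Strict monotonicity of $t \mapsto \lvert t\rvert^{p-2}t$ on $\mathbb{R}$ ensures the bracketed factor has the same sign as $g(y) - g(x)$, where $g := h_1 - h_2$, and the elementary inequality $(g^+(y) - g^+(x))(g(y) - g(x)) \geq 0$ (verified by case analysis on the signs of $g(x)$ and $g(y)$) shows every summand is non-negative. Set $V_+ = \{x \in V : h_1(x) > h_2(x)\}$. The same case analysis reveals that an edge with exactly one endpoint in $V_+$ contributes a strictly positive summand, so vanishing of the total sum rules out such mixed edges, and connectedness of $\Gamma$ forces either $V_+ = \emptyset$ (giving the desired $h_1 \leq h_2$) or $V_+ = V$. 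In the latter case, vanishing of the remaining summands forces $h_1(y) - h_1(x) = h_2(y) - h_2(x)$ on every edge, so $g$ is a positive constant $c$; then $c \cdot 1_V = \psi \in B(\overline{C_c(\Gamma)}_{D_p})$ would give $1_V \in \overline{C_c(\Gamma)}_{D_p}$, making $\Gamma$ $p$-parabolic by \cite[Theorem 3.1]{Yam76}, contrary to our standing $p$-hyperbolicity assumption. The main obstacle is the opening step: without knowing that $(h_1 - h_2)^+$ lies in $B(\overline{C_c(\Gamma)}_{D_p})$, there is no valid way to pair it against $\Delta_p h_i$, and the entire monotonicity argument collapses.
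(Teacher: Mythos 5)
Your proposal is correct, and its skeleton coincides with the paper's: the test function is the same up to sign (the paper pairs against $\min\{h_2-h_1,0\}=-(h_1-h_2)^+$), membership in $B(\overline{C_c(\Gamma)}_{D_p})$ is obtained exactly as you do, from the fact that the function vanishes on $\partial_p(\Gamma)$ together with the characterization of $B(\overline{C_c(\Gamma)}_{D_p})$ via vanishing of the Gelfand transform on the $p$-harmonic boundary (\cite[Theorem 4.8]{PulsPMJ}), the vanishing of $\langle \Delta_p h_i,\cdot\rangle$ on that space is quoted from \cite[Lemma 4.6]{PulsPMJ} (you rederive it by summation by parts plus H\"older, which is fine), and strict monotonicity of $t\mapsto|t|^{p-2}t$ is used to exclude edges crossing $\{h_1>h_2\}$. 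Where you genuinely diverge is in the organization and the endgame. Your edge-wise bookkeeping via the single inequality $(g^+(y)-g^+(x))(g(y)-g(x))\geq 0$, $g=h_1-h_2$, is tidier than the paper's decomposition of the double sum into $T_1,T_2,T_3$ over the sets $A,B,C_x,D_x$ with its several sub-cases, and it yields the same conclusion (no mixed edges). For the finish, the paper supposes $h_1(z)>h_2(z)$ at some vertex and manufactures a mixed edge along a path from $z$ to a vertex near a boundary character where $h_1<h_2$ strictly; the existence of such a character rests on \cite[Corollary 4.9]{PulsPMJ} (equality on $\partial_p(\Gamma)$ forces equality on $V$). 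You instead invoke connectedness to reduce to $V_+=\emptyset$ or $V_+=V$, and in the latter case show $h_1-h_2$ is a positive constant, so $1_V\in\overline{C_c(\Gamma)}_{D_p}$ and $\Gamma$ would be $p$-parabolic by \cite[Theorem 3.1]{Yam76}, against the standing hyperbolicity assumption. Your route avoids Corollary 4.9 altogether, at the price of explicitly isolating the constant-difference case; note that in that case you could finish even faster by observing that $\widehat{h_1}-\widehat{h_2}$ is then the constant $c>0$ on all of $Sp(BD_p(\Gamma))$ by density of $i(V)$, contradicting $h_1\leq h_2$ on the nonempty $\partial_p(\Gamma)$. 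Either way, the argument is sound.
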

\begin{proof}Define a function $f$ on $V$ by $f = \min\{ h_2 - h_1, 0 \}$. Theorem 4.8 of \cite{PulsPMJ} says $f \in B(\overline{C_c(\Gamma)}_{D_p})$ since $f = 0$ on $\partial_p(\Gamma)$. By Lemma 4.6 of \cite{PulsPMJ} we have $\langle \Delta_p h_1, f \rangle = 0$ and $\langle \Delta_p h_2, f \rangle = 0$, which implies $\langle \Delta_p h_1 - \Delta_p h_2, f \rangle = 0$. Now set 
\begin{eqnarray*} 
A  &  =   &  \{ x \in V \mid h_1(x) \leq h_2(x) \} , \\
B  &  =   & \{  x \in V \mid h_2(x) < h_1(x) \}, 
\end{eqnarray*}
and for $a \in V$ let
\begin{eqnarray*}
C_a  &  =  & \{ y \in V \mid y \in N_a \mbox{ and } h_1(y) \leq h_2(y) \}, \\
D_a  &  =  &  \{ y \in V \mid y \in N_a \mbox{ and } h_2(y) < h_1(y) \}.
\end{eqnarray*}
Now
\begin{equation}
0 = \sum_{x \in V} \sum_{y \in N_x} ( T(h_1, f, x, y) - T(h_2, f, x, y)) = T_1 + T_2 + T_3 \label{eq:innerprodzero}
\end{equation}
where
\begin{eqnarray*}
T_1  &  =  &  \sum_{x \in A} \sum_{y \in C_x} ( T(h_1, f, x, y) - T(h_2, f, x, y)), \\
T_2 &  =   & \left( \sum_{x \in A} \sum_{y \in D_x} + \sum_{x \in B} \sum_{y \in C_x}\right)(T(h_1, f, x, y) - T(h_2, f, x, y)),
\end{eqnarray*}
and
\begin{eqnarray*}
T_3 & = &\sum_{x \in B} \sum_{y \in D_x} ( T(h_1, f, x, y) - T(h_2, f, x, y)).
\end{eqnarray*}
Since $f(x) = f(y) =0$ for $x \in A$ and $y \in C_x$ it follows that $T_1 = 0$. We now claim that $T_3  \leq 0$. To see the claim let $a$ and $b$ be real numbers such that $a \neq b$. It follows from the inequality
\[ \vert a \vert^{p-2} a (a-b) > \vert b \vert^{p-2} b (a - b) \]
that
\begin{equation}
 T(h_1, h_1-h_2, x, y) \geq T(h_2, h_1-h_2, x, y ). \label{eq:pharmineq}
\end{equation}
Equality occurs if and only if $(h_1-h_2)(x) = (h_1 - h_2)(y).$ Now if $x \in B$ and $y \in D_x$, then $f(y) - f(x) = (h_2-h_1)(y) - (h_2-h_1)(x)$. Combining (\ref{eq:pharmineq}) with the fact $T(h_k, h_1-h_2, x, y) = -T(h_k, h_2-h_1, x, y),$ where $k=1$ or $k=2$, we obtain $T_3 \leq 0$, which is our claim. 

We now proceed to show that if there is a pair of vertices $x$ and $y$ that satisfy $x\in A, y\in D_x$ or $x\in B, y\in C_x,$ then $T_2 < 0.$ Suppose $x \in A$ and $y \in D_x$. Then $f(y) - f(x) = h_2(y) - h_1(y) < 0$ and

\[ T(h_1, f, x, y) - T(h_2, f, x, y) = \left(h_2(y) - h_1(y)\right) \times  \]
\[ \left( \vert h_1(y) - h_1(x) \vert^{p-2} (h_1(y) - h_1(x)) - \vert h_2(y) - h_2(x) \vert^{p-2}(h_2(y) - h_2(x))\right). \]

Also $h_1(y) - h_1(x) > h_2(y) - h_2(x)$ because $h_1(y) - h_2(y) > 0 \geq h_1(x) - h_2(x).$ So if $h_2(y) \geq h_2(x)$ we see that $T(h_1, f, x, y) - T(h_2, f, x, y) < 0$ since $h_1(y) - h_1(x) > h_2(y) - h_2(x).$ On the other hand if $h_2(y) < h_2(x)$ and $h_1(y) > h_1(x)$ we obtain 
\[ T(h_1, f, x, y) - T(h_2, f, x, y) \]
\[ =(h_2(y) - h_1(y))\left( \vert h_1(y) - h_1(x) \vert^{p-1} + \vert h_2(y) - h_2(x) \vert^{p-1} \right) < 0 \]
since $\vert h_2(y) - h_2(x) \vert = - (h_2(y) - h_2(x)).$ The only other possibility is $h_2(y) < h_2(x)$ and $h_1(y) \leq h_1(x)$. If this is the case then $h_2(y) < h_1(y) \leq h_1(x) \leq h_2(x)$ due to $x \in A$ and $y \in D_x$. Consequently, $h_2(y) - h_2(x) < h_1(y) -h_1(x)$ and $h_1(x) - h_1(y) < h_2(x) - h_2(y)$; hence, $\vert h_1(y) - h_1(x) \vert < \vert h_2(y) - h_2(x) \vert.$ It now follows that
\[ T(h_1, f, x, y) - T(h_2, f, x, y) \]
\[ = (h_2(y) - h_1(y)) \left( \vert h_2(y) - h_2(x) \vert^{p-1} - \vert h_1(y) - h_1(x) \vert^{p-1} \right) < 0. \]
A similar argument can be used to show that $T(h_1, f, x, y) - T(h_2, f, x, y) < 0$ for each $x\in B$ and $y \in C_x.$ Hence, if $x \in A, y \in D_x$ or $x \in B, y \in C_x$, then $T_2 <0.$ Since $T_1 = 0$ and $T_3 \leq 0$, it follows from (\ref{eq:innerprodzero}) that it must be the case $T_2 = 0$. Thus it is impossible to have a pair of vertices $x$ and $y$ with $x \in A, y \in D_x$ or $x\in B, y\in C_x.$

Now assume that $h_1(z) > h_2(z)$ for some $z \in V$. We claim that there exists vertices $x_0, y_0$ in $V$ for which $y_0 \in N_{x_0}, h_1(x_0) > h_2(x_0)$ and $h_1( y_0) \leq h_2(y_0).$ To see the claim suppose $h_1 = h_2$ on $\partial_p(\Gamma)$, then $h_1 = h_2$ on $V$ by \cite[Corollary 4.9]{PulsPMJ}. So there exists an $x \in \partial_p(\Gamma)$ that satisfies $h_1(x) < h_2(x).$ Let $(x_n) \rightarrow x$ where $(x_n)$ is a sequence in $V$. Now there exists a term $x_m$ in this sequence such that $h_1(x_m) < h_2(x_m)$. Since $\Gamma$ is connected there is a path from $z$ to $x_m$. Thus there are vertices $x_0$ and $y_0$ on this path with $y_0 \in N_{x_0}, h_1(x_0) > h_2(x_0)$, and $h_1(y_0) < h_2(y_0)$ because $h_1(z) > h_2(z)$ and $h_1(x_m) < h_2(x_m)$. Thus $x_0 \in B$ and $y_0 \in C_{x_0},$ a contradiction. Therefore, $h_1(z) \leq h_2(z)$ for all $z \in V.$
\end{proof}

{\em Proof of Theorem \ref{fsigma}}. Let $1 < p \in \mathbb{R}$. Since $Sp(BD_p(\Gamma))$ is a normal space, there exists for each $x \in R_p(\Gamma)$ a sequence $(U_j(x))$ of open sets containing $x$ such that $\overline{U}_{j+1} (x) \subseteq U_j(x).$ For each $j \in \mathbb{N}$ there exists a finite number of points $x_{j,k}, 1 \leq k \leq N_j$ such that $U_j (x_{j,k})$ cover $R_p(\Gamma).$ For notational simplicity we will denote $U_j(x_{j,k})$ by $U_{j,k}.$ Using Urysohn's lemma we can construct a  continuous function $\phi_{j,k}$ with $\phi_{j,k} = 2$ on $U_{j,k}$ and $\phi_{j,k} =-1$ on $Sp(BD_p(\Gamma)) \setminus U_{j-1,k}$. By the density of $BD_p(\Gamma)$ in $C(Sp(BD_p(\Gamma)))$ there exists a $g \in BD_p(\Gamma)$ such that $\vert \phi_{j,k} - g \vert < \frac{1}{2}$. Set $f_{j,k} = \max(\min(1,g),0)$, so $f_{j,k} \in BD_p(\Gamma), 0 \leq f_{j,k} \leq 1, f_{j,k} = 1$ on $U_{j,k}$ and $f_{j,k} =0$ on $Sp(BD_p(\Gamma))\setminus U_{j-1,k}$.  The $p$-Royden decomposition of $BD_p(\Gamma)$ yields a unique $p$-harmonic function $h_{j,k}\in BHD(\Gamma)$ and a unique $u_{j,k} \in B(\overline{C_c(\Gamma)}_{D_p})$ such that $f_{j,k} = u_{j,k} + h_{j,k}$. Because $u_{j,k} = 0$ on $\partial_p(\Gamma)$ by \cite[Theorem 4.8]{PulsPMJ}, we see that $f_{j,k} = h_{j,k}$ on $\partial_p(\Gamma)$. Now define 
\[ R_{j,k} = \{ x \in R_p(\Gamma) \cap \overline{U}_{j,k} \mid \lim_{x_n \rightarrow x} h_{j,k}(x_n) < f_{j,k}(x) =1 \}, \]
where $(x_n)$ is a sequence in $V$. Observe that if $R_{j,k}$ is nonempty, then it only contains elements of $R_p(\Gamma)\setminus \partial_p(\Gamma).$

Let $x \in R_p(\Gamma) \setminus \partial_p(\Gamma).$ We will now show that there exists $j,k \in \mathbb{N}$ such that $x \in R_{j,k}$. Since $x \notin \partial_p(\Gamma)$ there exists a $u \in B(\overline{C_c(\Gamma)}_{D_p})$ such that $u(x) \neq 0$. Since $B(\overline{C_c(\Gamma)}_{D_p})$ is an ideal we may assume that $u \geq 0$ on $V$ and $u(x) >0$. Replacing $u$ by $u^{-1}(x) u$ if necessary we may assume that $u(x) = 1$. Let $h \in BHD_p(\Gamma)$ that satisfies $h \geq 1$ on $V$. Set $f = u + h$, so $f \in BD_p(\Gamma)$ and $f=h$ on $\partial_p(\Gamma)$. Let $(x_n)$ be a sequence in $V$ that converges to $x$. Now $\lim_{n \rightarrow \infty} h(x_n) < f(x)$. Because $f$ is continuous we can find an open set $U_{j,k}$ that contains $x$ and satisfies 
\[ m = \inf_{U_{j-1,k} \cap R_p(\Gamma)} f > \lim_{n \rightarrow \infty} h(x_n).\]
It now follows
\[ f_{j,k} \leq \frac{f}{m} \mbox{ on } R_p(\Gamma), \]
which implies that $h_{j,k} \leq \frac{h}{m}$ on $\partial_p (\Gamma)$. An appeal to the comparison principle gives us 
\[ \lim_{n \rightarrow \infty} h_{j,k} (x_n) \leq \frac{1}{m} \lim_{n \rightarrow \infty} h(x_n) < 1 = f_{j,k} (x), \]
hence $x \in R_{j,k}$. Furthermore,
$$ R_{j,k} = \bigcup_{i = 1}^{\infty} \left( R_p(\Gamma) \cap \overline{U}_{j,k} \cap \{ \overline{ y \in V \mid h_{j,k} (y) < 1 - 1/i} \} \right). $$
Thus $R_{j,k}$ is a countable union of compact sets. Theorem \ref{fsigma} now follows because 
$$ R_p(\Gamma) \setminus \partial_p(\Gamma) = \bigcup_{j = 1}^{\infty} \bigcup_{k =1}^{N_j} R_{j,k}. $$

\section{Proof of Theorem \ref{boundchar}} \label{proofboundchar}
Before we prove Theorem \ref{boundchar} we need to state some definitions and prove several preliminary results. 

Fix a real number $p >1$. Recall that $E$ denotes the edge set of a graph $\Gamma$. Denote by $\mathcal{F}(E)$ the set of all real-valued functions on $E$ and let $\mathcal{F}^+(E)$ be the subset of $\mathcal{F}(E)$ that consists of all nonnegative functions. For $\omega \in \mathcal{F}(E)$ set 
\[ \mathcal{E}_p (\omega) = \sum_{e \in E} \vert \omega (e) \vert^p. \]
The edge set of a path $\gamma$ in $\Gamma$ will be denoted by $Ed(\gamma)$, remember $E(\gamma)$ represents the extreme points of $\gamma$. Let $Q$ be a set of paths in $\Gamma$, denote by $\mathcal{A}(Q)$ the set of all $\omega \in \mathcal{F}^+(E)$ that satisfy $\mathcal{E}_p(\omega) < \infty$ and $\sum_{e \in Ed(\gamma)} \omega(e) \geq 1$ for each $\gamma \in Q$. The {\em extremal length} of order $p$ for $Q$ is defined by
\[ \lambda_p(Q)^{-1} = \inf\{\mathcal{E}_p(\omega) \mid \omega \in \mathcal{A}(Q) \}. \]
A variation of the next lemma was proved for the case $p=2$ in \cite[Lemma 6.13]{Soardibook}. In the $p=2$ case the conclusion of the lemma is stronger in that $g$ belongs to $\overline{C_c(\Gamma)}_{D_2}$ instead of the larger space $D_2(\Gamma)$. 

\begin{Lem} \label{gequalinfinity}
Let $K$ be a compact subset of $R_p(\Gamma)$ with $K \cap \partial_p(\Gamma) = \emptyset$. Then there exists a function $g \in D_p(\Gamma)$ that satisfies $g = \infty$ on $K$ and $g=0$ on $\partial_p(\Gamma)$.
\end{Lem}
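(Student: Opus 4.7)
The plan is to construct $g$ as a $D_p$-convergent series $g = \sum_{n=1}^{\infty} u_n$ of nonnegative bounded functions $u_n \in \overline{C_c(\Gamma)}_{D_p}$ whose Gelfand transforms satisfy $\hat u_n \geq 1$ on $K$ and whose $D_p$-norms satisfy $\|u_n\|_{D_p} \leq 2^{-n}$. Granting such $u_n$, absolute convergence in the Banach space $D_p(\Gamma)$ places $g$ in the closed subspace $\overline{C_c(\Gamma)}_{D_p}$, which I take as the intended sense of ``$g = 0$ on $\partial_p(\Gamma)$'' for unbounded $g$. For $\chi \in K$ and any sequence $(x_k) \subset V$ with $x_k \to \chi$ in $Sp(BD_p(\Gamma))$, nonnegativity yields $g(x_k) \geq \sum_{n=1}^{N} u_n(x_k)$, and continuity of the Gelfand transform of the $N$-th partial sum gives $\liminf_k g(x_k) \geq \sum_{n=1}^{N} \hat u_n(\chi) \geq N$; since $N$ is arbitrary, $g(x_k) \to \infty$.

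To produce the $u_n$, I would first build a single $F \in B(\overline{C_c(\Gamma)}_{D_p})$ with $F \geq 0$ on $V$ and $\hat F \geq 1$ on $K$. For each $\chi \in K$ the hypothesis $K \cap \partial_p(\Gamma) = \emptyset$ yields some $f_\chi \in B(\overline{C_c(\Gamma)}_{D_p})$ with $\hat f_\chi(\chi) \neq 0$; replacing $f_\chi$ by $f_\chi^2$ (which stays in $B(\overline{C_c(\Gamma)}_{D_p})$ by the ideal property of this space in $BD_p(\Gamma)$, as already used in the proof of Theorem~\ref{fsigma}) makes $f_\chi \geq 0$ on $V$ with $\hat f_\chi(\chi) > 0$. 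A finite sub-cover of $K$ by the open sets $\{\hat f_\chi > 0\}$, summed and rescaled by $(\min_K \hat F)^{-1}$, produces $F$. Next, choose $\psi_n \in C_c(\Gamma)$ with $\|F - \psi_n\|_{D_p} \to 0$ and define $u_n = \max(F - \psi_n,\, 0)$. Because $\psi_n$ has finite support and $i(V)$ is dense in $Sp(BD_p(\Gamma))$, $\hat \psi_n$ vanishes by continuity on all of $R_p(\Gamma)$, hence $\widehat{F - \psi_n} = \hat F$ on $R_p(\Gamma)$; since $F \geq 0$ on $V$ forces $\hat F \geq 0$ on $Sp(BD_p(\Gamma))$, we obtain $\hat u_n = \hat F \geq 1$ on $K$. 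The 1-Lipschitz estimate $|t^+ - s^+| \leq |t - s|$ applied edge by edge yields $\|u_n\|_{D_p} \leq \|F - \psi_n\|_{D_p}$, so after passing to a subsequence, $\|u_n\|_{D_p} \leq 2^{-n}$.

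The main obstacle is verifying that $u_n$ genuinely lies in $\overline{C_c(\Gamma)}_{D_p}$, i.e.\ that the truncation $t \mapsto t^+$ preserves this closure. The one-edge bound above provides a $D_p$-norm contraction but not norm continuity of the nonlinear truncation; I would handle this by taking $\phi_k \in C_c(\Gamma)$ with $\phi_k \to F - \psi_n$ in $D_p$, noting that the sequence $\phi_k^+ \in C_c(\Gamma)$ is $D_p$-bounded, extracting a weakly convergent subsequence by reflexivity of $D_p(\Gamma)$, identifying the weak limit with $(F - \psi_n)^+$ via pointwise convergence (evaluation at each vertex being a continuous linear functional on $D_p(\Gamma)$), and then applying Mazur's theorem to the convex set $C_c(\Gamma)$ to deduce that $(F - \psi_n)^+$ lies in the norm closure $\overline{C_c(\Gamma)}_{D_p}$. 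A secondary subtlety is the very interpretation of ``$g = \infty$ on $K$'' and ``$g = 0$ on $\partial_p(\Gamma)$'' for unbounded $g$: the sequential statement above and the membership $g \in \overline{C_c(\Gamma)}_{D_p}$ respectively make these precise.
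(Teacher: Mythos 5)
Your proof is correct, but it takes a genuinely different route from the paper's. The paper starts from a Urysohn function $f \in BD_p(\Gamma)$ with $f=1$ on $K$ and $f=0$ on $\partial_p(\Gamma)$, solves $p$-Dirichlet problems on an exhaustion $(U_n)$ to obtain $h_n$ that are $p$-harmonic on $U_n$ and equal to $f$ off $U_n$, and then uses weak compactness in the reflexive space $D_p(\Gamma)$, stability of $p$-harmonicity under pointwise limits, and the uniqueness result \cite[Corollary 4.9]{PulsPMJ} to force the limit to be $0$ and hence $I_p(h_n,V)\to 0$, before summing a subsequence with $\parallel h_n \parallel_{D_p}\le 2^{-n}$. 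You bypass the $p$-harmonic machinery entirely: the definition of $\partial_p(\Gamma)$, the ideal property of $B(\overline{C_c(\Gamma)}_{D_p})$ and compactness of $K$ give a nonnegative $F\in B(\overline{C_c(\Gamma)}_{D_p})$ with $\hat F\ge 1$ on $K$, and the truncated differences $u_n=(F-\psi_n)^+$, $\psi_n\in C_c(\Gamma)$, play the role of the $h_n$. This buys two things: the argument is more elementary (no Dirichlet-problem solvability, no weak limits of harmonic functions, no comparison/uniqueness theorems), and it yields the stronger conclusion $g\in\overline{C_c(\Gamma)}_{D_p}$, which is precisely the $p=2$ form of the lemma from Soardi's book that the paper remarks it does not recover. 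Two small comments. First, your Mazur/weak-compactness argument for $(F-\psi_n)^+\in\overline{C_c(\Gamma)}_{D_p}$ is unnecessary: since $F\ge 0$, the function $(F-\psi_n)^+$ agrees with $F$ off the finite support of $\psi_n$, so $(F-\psi_n)^+-F\in C_c(\Gamma)$ and membership is immediate. Second, the equality $\hat u_n=\hat F$ on $K$ is cleanest to justify by noting that every character is a limit of point evaluations (as $i(V)$ is dense in $Sp(BD_p(\Gamma))$), so Gelfand transforms respect pointwise inequalities and the truncation; alternatively, $u_n\ge F-\psi_n$ on $V$ already gives $\hat u_n\ge 1$ on $K$, which is all you use. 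Your sequential reading of ``$g=\infty$ on $K$'' and your reading of ``$g=0$ on $\partial_p(\Gamma)$'' as membership in $\overline{C_c(\Gamma)}_{D_p}$ are consistent with how the lemma is actually invoked in Lemma~\ref{infinitelength}, and are no less precise than the paper's own formulation for the unbounded function $g$.
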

\begin{proof}
By Urysohn's lemma there exists an $f \in C(Sp(BD_p(\Gamma)))$ that satisfies the following: $0 \leq f \leq 1, f=1$ on $K$ and $f = 0$ on $\partial_p(\Gamma)$. Using the argument from the first paragraph of the proof of Theorem \ref{fsigma} we may and do assume $f \in BD_p(\Gamma)$.

Let $(U_n)$ be an exhaustion of $V$ by finite connected subsets. Applying Theorem 3.1 of \cite{HoloSoar} yields a function $h_n$ that is $p$-harmonic on $U_n$ and equals $f$ on $V\setminus U_n$. It follows from the minimizer property of $p$-harmonic functions on $U_n$ that $I_p(h_n, V) \leq I_p(f,V)$. Hence, $h_n \in BD_p(\Gamma)$ for each $n \in \mathbb{N}$. Also, $h_n = 0$ on $\partial_p(\Gamma), h_n = 1$ on $K$ and $0 \leq h_n \leq 1$ for each $n$. By passing to a subsequence if necessary, we may assume that $(h_n)$ converges pointwise to a function $h$ because $\overline{\{ h_n(x)\mid n \in \mathbb{N} \}}$ is compact for each $x \in V$. By Lemma 3.2 of \cite{HoloSoar}, $h$ is $p$-harmonic on $V$. Since the sequence $(I_p(h_n, V))$ is bounded, Theorem 1.6 on page 177 of \cite{TaylorLay} says that by passing to a subsequence if necessary, we may assume that $(h_n)$ converges weakly to a function $\overline{h} \in D_p(\Gamma)$. Because evaluation by $x \in V$ is a continuous linear functional on $D_p(\Gamma)$, we have that $h_n(x) \rightarrow \overline{h}(x)$ for each $x \in V$. Thus $h = \overline{h}$ and $h \in BD_p(\Gamma)$. It follows from \cite[Corollary 4.9]{PulsPMJ} that $h = 0$ on $V$, due to that fact $h=0$ on $\partial_p(\Gamma)$.

Since $h_n \rightarrow h$ pointwise on $U_k$ for $k \in \mathbb{N}$, it follows $I_p(h_n, U_k) \rightarrow I_p(h, U_k) = 0$ for each $k$. Consequently, $I_p(h_n, V) \rightarrow 0$. By taking a subsequence if necessary, we may assume that $\parallel h_n \parallel_{D_p} \leq 2^{-n}$. Let $\epsilon > 0$ be given and for $m \in \mathbb{N}$, let $g_m = \sum_{k=1}^m h_k$. There exists $N \in \mathbb{N}$ such that $2^{-N} < \epsilon$. For $m, n \in \mathbb{N}$ with $m > n \geq N$ we see that 
\[ \parallel g_m - g_n \parallel_{D_p} = \parallel \sum_{k = n+1}^m  h_k \parallel_{D_p} \leq \sum_{k=n+1}^m 2^{-k} < 2^{-n} < \epsilon. \]
Hence, the Cauchy sequence $(g_m)$ converges to $g = \sum_{k=1}^{\infty} h_k$ in the $D_p$-norm. Thus $g \in D_p(\Gamma).$ For $x\in K, g_m(x) =m$, so $g(x) = \infty$; also $g = g_m =0$ on $\partial_p(\Gamma)$. The proof of the lemma is complete.
\end{proof}

The next result was shown to be true for the case $p = 2$ in \cite[Theorem 6.16]{Soardibook}. Our proof is essentially the same, and we include it for completeness. 
\begin{Lem}\label{infinitelength}
Let $P$ be a family of one-sided infinite paths in $\Gamma$ and let 
\[ K = \overline{\cup_{\gamma \in P} E(\gamma)}. \]
If $K$ is disjoint from $\partial_p(\Gamma)$, then $\lambda_p(P) = \infty$.
\end{Lem}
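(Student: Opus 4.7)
The plan is to reduce the statement to producing a single admissible edge weighting of finite $p$-energy: once such an $\omega \in \mathcal{A}(P)$ is in hand, replacing $\omega$ by $c\omega$ for $c \to 0^+$ shows that the infimum defining $\lambda_p(P)^{-1}$ is zero, provided the path sums of $\omega$ are actually infinite (so that scaling preserves admissibility). The tool that manufactures $\omega$ is Lemma~\ref{gequalinfinity}, applied to $K$.

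First I would verify that $K$ qualifies for Lemma~\ref{gequalinfinity}: each $E(\gamma) \subseteq R_p(\Gamma)$ by definition, and $R_p(\Gamma) = Sp(BD_p(\Gamma)) \setminus i(V)$ is closed in the compact space $Sp(BD_p(\Gamma))$, so $K$ is a compact subset of $R_p(\Gamma)$; disjointness from $\partial_p(\Gamma)$ is our hypothesis. Lemma~\ref{gequalinfinity} then yields $g \in D_p(\Gamma)$ with $g = 0$ on $\partial_p(\Gamma)$ and $g = \infty$ on $K$, meaning that $g(x_n) \to \infty$ for any sequence $(x_n)$ in $V$ converging in $Sp(BD_p(\Gamma))$ to a point of $K$ (the partial-sum construction from the proof of that lemma makes this precise). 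Now set $\omega(e) = |g(y) - g(x)|$ for each edge $e = \{x,y\}$. Routine bookkeeping gives $\mathcal{E}_p(\omega) = \tfrac{1}{2}I_p(g,V) < \infty$, since each unordered edge is counted twice in $I_p$.

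The substantive step is to show $\sum_{e \in Ed(\gamma)} \omega(e) = \infty$ for every $\gamma \in P$. I would argue this as follows. Let $x_0, x_1, \dots$ be the vertex sequence of $\gamma$. Because $\gamma$ has no self-intersections, the values $i(x_n)$ are distinct, and the characteristic function of any single vertex lies in $BD_p(\Gamma)$; this rules out any cluster point of $(i(x_n))$ lying in $i(V)$. By compactness, some subsequence $i(x_{n_k})$ converges to a point $\chi \in R_p(\Gamma)$, and by the definition of $E(\gamma)$ we have $\chi \in E(\gamma) \subseteq K$. Hence $g(x_{n_k}) \to \infty$, and the telescoping estimate
\[ \sum_{i=0}^{n_k - 1} |g(x_{i+1}) - g(x_i)| \ \geq\ |g(x_{n_k}) - g(x_0)| \ \longrightarrow\ \infty \]
forces the path sum along $\gamma$ to diverge. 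Consequently $\omega \in \mathcal{A}(P)$, and for any $c > 0$ the rescaled $c\omega$ also lies in $\mathcal{A}(P)$ while $\mathcal{E}_p(c\omega) = c^p \mathcal{E}_p(\omega) \to 0$. This gives $\lambda_p(P)^{-1} = 0$, i.e.\ $\lambda_p(P) = \infty$. The main obstacle I anticipate is the cluster-point bridge between the topology of $Sp(BD_p(\Gamma))$ and the pointwise blow-up of $g$ along $V(\gamma)$; once that is clean, the rest is the triangle inequality and the homogeneity of $\mathcal{E}_p$.
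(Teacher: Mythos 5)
Your proof is correct and follows essentially the same route as the paper: apply Lemma~\ref{gequalinfinity} to $K$ to get $g \in D_p(\Gamma)$ blowing up on $K$, observe that every $\gamma \in P$ has its extreme points in $K$ so the increments of $g$ along $\gamma$ have divergent sum, and conclude $\lambda_p(P)=\infty$. The only difference is that where the paper cites Lemma~2.3 of Kayano--Yamasaki for the last step, you re-derive it inline by taking $\omega(e)=|g(y)-g(x)|$, noting $\mathcal{E}_p(\omega)=\tfrac12 I_p(g,V)<\infty$, and rescaling, which is a perfectly valid (and self-contained) substitute.
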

\begin{proof}
By Lemma \ref{gequalinfinity} there exists a $g \in D_p(\Gamma)$ such that $g = \infty$ on $K$ and $g = 0$ on $\partial_p(\Gamma)$. Let $\gamma \in P$ and let $x_1, x_2, x_3 \dots$ be the vertex representation of $\gamma$. Since $E(\gamma) \subseteq K$ we have that $g(\gamma) = \lim_{k \rightarrow \infty} g(x_k) = \infty$. Thus
\[ \sum_{k=1}^{\infty} \vert g(x_k) - g(x_{k+1}) \vert \geq \lim_{k \rightarrow \infty} (g(x_k) - g(x_1)) = \infty. \]
By \cite[Lemma 2.3]{KayYam} we obtain $\lambda_p(P) = \infty.$
\end{proof}

A connected infinite subset $D$ of $V$ with $\partial D \neq \emptyset$ is defined to be $D_p$-massive if there exists a $p$-harmonic function $u$ on $D$ that satisfies the following: $0 \leq u \leq 1, u =0 \mbox{ on } \partial D, \sup_D u =1$ and $I_p(u, D) < \infty$. The function $u$ is known as an inner potential of $D$.

\begin{Prop} \label{dpmassivefinitelength}
Let $D$ be a $D_p$-massive subset, with inner potential $u$, of $V$. Denote by $P_D$ the set of all one-sided infinite paths contained in $D \cup \partial D.$ Then $\lambda_p(P_D) < \infty.$
\end{Prop}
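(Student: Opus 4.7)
The plan is to produce a uniform positive lower bound on $\mathcal{E}_p(\omega)$ for every $\omega \in \mathcal{A}(P_D)$ by using the inner potential $u$ of $D$ to manufacture a path measure on $P_D$ and then applying a H\"older-type duality. Recall that $\lambda_p(P_D) < \infty$ is equivalent to $\inf_{\omega \in \mathcal{A}(P_D)} \mathcal{E}_p(\omega) > 0$.

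First, extend $u$ by zero to $V \setminus (D \cup \partial D)$; since $u = 0$ on $\partial D$ already, the extension still lies in $D_p(\Gamma)$ with $I_p(u,V) = I_p(u,D) < \infty$. Define the oriented $p$-current
\[ F(x,y) = \vert u(y)-u(x)\vert^{p-2}(u(y)-u(x)). \]
Then $F$ is divergence-free at every $x \in D$ by $p$-harmonicity of $u$, and $\sum_{e \in E} \vert F(e)\vert^q = \sum_e \vert u(y)-u(x)\vert^{p} = \tfrac{1}{2} I_p(u,V) < \infty$, where $q = p/(p-1)$. The total flux of $F$ out of $\partial D$ is
\[ \Phi = \sum_{x \in \partial D} \sum_{y \in N_x \cap D} u(y)^{p-1}, \]
which is strictly positive, since $u > 0$ on $D$ by the strong minimum principle for nonnegative $p$-harmonic functions (applied to $u$ vanishing on $\partial D$ with $\sup_D u = 1$) and every $x \in \partial D$ has at least one neighbor in $D$.

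Next, decompose $F$ as a positive directed flow from $\partial D$ to infinity inside $D \cup \partial D$, producing a positive measure $\mu$ on $P_D$ supported on simple directed paths from $\partial D$ to infinity with $\mu(P_D) = \Phi$ and edge occupation $\nu(e) := \mu(\{\gamma \in P_D : e \in Ed(\gamma)\}) = \vert F(e)\vert$ for every $e \in E$. For any $\omega \in \mathcal{A}(P_D)$, integrating the admissibility inequality $\sum_{e \in Ed(\gamma)} \omega(e) \ge 1$ against $\mu$ and applying H\"older with exponents $(p,q)$ gives
\[ \Phi = \mu(P_D) \le \sum_{e \in E} \omega(e)\,\nu(e) \le \mathcal{E}_p(\omega)^{1/p}\, \bigl(\tfrac{1}{2} I_p(u,V)\bigr)^{1/q}, \]
so $\mathcal{E}_p(\omega) \ge \Phi^{p} / \bigl(\tfrac{1}{2} I_p(u,V)\bigr)^{p/q}$ independently of $\omega$, which yields $\lambda_p(P_D) < \infty$.

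The main obstacle is the flow-path decomposition in the third paragraph: for $p=2$ this is the classical decomposition of a finite-energy unit flow used by Soardi in Theorem 6.16, but for general $p$ the $p$-current $F$ is nonlinear in $u$, so one has to argue with $F$ directly as an oriented positive flow and extract infinite simple paths via cycle cancellation, verifying that the paths remain in $D \cup \partial D$ (which they do because $F$ vanishes on edges not having both endpoints in $D \cup \partial D$) and that the positive flux $\Phi$ is truly routed into infinite paths rather than trapped in finite subgraphs.
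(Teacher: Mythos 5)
Your strategy is genuinely different from the paper's. The paper picks a vertex $a \in \partial D$, invokes Nakamura--Yamasaki (Theorems 2.1 and 2.4) to reduce $\lambda_p(P_a) < \infty$ to $cap_p(\{a\}, \infty, D) > 0$, and then proves positivity of the capacity by a comparison argument: if the capacity were zero, the $p$-harmonic replacements $v_k$ of near-optimal admissible functions would form a decreasing sequence bounded below by the inner potential $u$, hence forced up to the constant $1$, a contradiction. You instead try to prove the extremal-length bound directly by exhibiting a finite-$q$-energy flow (the $p$-current $F = \vert u(y)-u(x)\vert^{p-2}(u(y)-u(x))$) and running H\"older duality against admissible densities. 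This is the ``hard'' direction of the modulus--capacity duality that Nakamura--Yamasaki's Theorem 2.4 packages for you, so you are in effect reproving a special case of their result from scratch.

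The gap is exactly the step you flag yourself: the decomposition of $F$ into a measure $\mu$ on one-sided infinite simple paths in $D \cup \partial D$ with $\mu(P_D) = \Phi$ and edge occupation controlled by $\vert F \vert$ is asserted, not proved, and it carries essentially all the weight of the argument. It can be carried out --- truncate to $B_n$, collapse the exterior to a single sink, apply the finite path/cycle decomposition to get measures $\mu_n$ on simple paths from $\partial D$ to distance $n$ with the correct mass and edge occupation, and pass to a weak limit using compactness of the path space, noting that boundary edges carry only inward-directed flow so the limit paths stay in $D$ after the first step --- but none of this appears in your write-up. Two smaller corrections: the decomposition only yields $\nu(e) \le \vert F(e) \vert$, not equality (cycles and bi-infinite circulations must be discarded), which is all the H\"older step needs; and $\Phi$ can be infinite when $\partial D$ has infinitely many edges into $D$, so you should either work with the flux $s_a > 0$ out of a single vertex $a \in \partial D$ or observe that $\Phi = \infty$ forces $\mathcal{A}(P_D) = \emptyset$ and hence $\lambda_p(P_D) = 0$. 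As written, the proof is incomplete at its central step.
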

\begin{proof}
Let $a \in D$ and let $P_a$ be the set of all paths in $P_D$ with initial point $a$. If $\lambda_p(P_a) < \infty$, then $\lambda_p(P_D) < \infty$ by \cite[Lemma 2.1]{KayYam}. Let $(B_n)$ be an exhaustion of $V$ by finite connected subsets of $V$ such that $B_1 \cap \partial D \neq \emptyset$. Pick an $a \in B_1 \cap \partial D$. By combining Theorem 2.1 and Theorem 2.4 of \cite{NakamuraYamasaki} we see that $\lambda_p(P_a) < \infty$ if and only if $cap_p(\{a\}, \infty, D) > 0$. Thus to finish the proof we need to show $cap_p(\{a\}, \infty, D) > 0$, which we now proceed to do.

Choose admissible functions $\omega_k, k \geq 2$, for condensers $(\{a\}, (D \cup \partial D) \setminus B_k, D)$ such that 
\begin{equation}
I_p(\omega_k, D \cap B_k) \leq cap_p(\{a\}, (D \cup \partial D)\setminus B_k, D) + \frac{1}{k}. \label{eq:condenserineq}
\end{equation}
Replacing all values of $\omega_k(x)$ on $D \cap B_k$ for which $\omega_k (x) < 0$ by 0 and replacing all values of $\omega_k(x)$ on $D \cap B_k$ for which $\omega_k(x) > 1$ by 1 decreases the value of $I_p(\omega_k, D \cap B_k)$. Thus we may and do assume $0 \leq \omega_k \leq 1$ on $D \cap B_k$. Theorem 3.11 of \cite{HoloSoar} tells us that there exists a unique $p$-harmonic function $v_2$ on $D \cap B_2$ such that $v_2 = \omega_2$ on $\partial(D \cap B_2)$. Extend $v_2$ to all of $D$ by setting $v_2 = 1$ on $D \setminus B_2$. By the minimizing property of $p$-harmonic functions,
\[ I_p( v_2, D \cap B_2) \leq I_p(\omega_2, D \cap B_2). \]
Since $u$ is $p$-harmonic on $D$ and $u(x) \leq v_2(x)$ for all $x \in \partial(D \cap B_2), u \leq v_2$ on $D \cap B_2$ by \cite[Theorem 3.14]{HoloSoar}. Pick $\omega_3$. The set $A = \{ x \in D \mid \omega_3(x) > v_2(x) \}$ is a subset of $D \cap B_2$. If $ A \neq \emptyset$, redefine $\omega_3$ by setting $\omega_3 = v_2$ on $A$. The redefined $\omega_3$ decreases $I_p(\omega_3, D \cap B_3)$, so (\ref{eq:condenserineq}) remains true. By continuing as above, we obtain a decreasing sequence of functions $(v_k)$ such that $v_k$ is $p$-harmonic on $B \cap B_k, v_k \geq u$, and 
\[ I_p(v_k, D \cap B_k) \leq I_p(\omega_k, D \cap B_k). \]
Now assume that $cap_p(\{ a \}, (D \cup \partial D) \setminus B_k, D) \rightarrow 0$. Then $I_p( v_k, D \cap B_k) \rightarrow 0$. Since $v_k \geq u$ and $\sup_D u = 1$, it must be the case that $(v_k) \rightarrow 1_D$, the constant function 1 on $D$. This is a contradiction because $(v_k)$ is a decreasing sequence of functions, $0 \leq v_2 \leq 1$ and $v_2 \neq 1$. Thus, $cap_p(\{ a \}, \infty, D) > 0$ and the proof of the proposition is complete.
\end{proof}

Our next result is  \cite[Theorem 6.18]{Soardibook} for the case $p=2$. We give a different proof of the result.
\begin{Lem} \label{containedinclosure} 
Let $P$ be the family of all one-sided infinite paths in $\Gamma$ and let $P_{\infty} \subseteq P$ be any subfamily with $\lambda_p(P_{\infty}) = \infty.$ Then
\[  \partial_p(\Gamma) \subseteq \overline{\{ \cup_{\gamma} E(\gamma) \mid \gamma \in P \setminus P_{\infty} \}}. \]
\end{Lem}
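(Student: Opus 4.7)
The plan is to argue by contradiction. Suppose there is $\chi_0 \in \partial_p(\Gamma) \setminus K$, where $K := \overline{\bigcup_{\gamma \in P \setminus P_\infty} E(\gamma)}$. By normality of $Sp(BD_p(\Gamma))$, pick open neighborhoods $\chi_0 \in U_0 \subseteq \overline{U_0} \subseteq U$ with $\overline{U}\cap K=\emptyset$; the hypothesis then forces every $\gamma \in P \setminus P_\infty$ to satisfy $E(\gamma) \subseteq Sp(BD_p(\Gamma))\setminus \overline{U}$. The target is to build a $D_p$-massive subset $D \subseteq V$ with
\[
  \overline{i(D \cup \partial D)} \cap R_p(\Gamma) \subseteq \overline{U}.
\]
Granted such a $D$, Proposition~\ref{dpmassivefinitelength} gives $\lambda_p(P_D) < \infty$; since $P_D \subseteq P_\infty$ would yield $\mathcal{A}(P_\infty) \subseteq \mathcal{A}(P_D)$ and hence $\lambda_p(P_D) \ge \lambda_p(P_\infty) = \infty$, we must have $P_D \not\subseteq P_\infty$. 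Pick $\gamma \in P_D \setminus P_\infty$: its extreme-point set $E(\gamma)$ is nonempty (the indicator $1_{\{x\}} \in C_c(\Gamma) \subseteq BD_p(\Gamma)$ shows $i(V)$ is discrete in its subspace topology, so the infinite set $i(V(\gamma))$ must accumulate in $R_p(\Gamma)$) and sits inside $\overline{i(D\cup\partial D)} \cap R_p \subseteq \overline{U}$, contradicting $E(\gamma) \subseteq Sp\setminus\overline{U}$.

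To construct $D$, use Urysohn's lemma together with the density of $\widehat{BD_p(\Gamma)}$ in $C(Sp(BD_p(\Gamma)))$ and truncation to $[0,1]$ (the procedure from the first paragraph of the proof of Theorem~\ref{fsigma}) to produce $f \in BD_p(\Gamma)$ with $0 \le f \le 1$, $f(\chi_0) \ge 3/4$, and $f \le 1/8$ on $Sp(BD_p(\Gamma))\setminus U_0$. Let $D$ be a connected component of $\{x \in V : f(x) > 1/2\}$ with $\chi_0 \in \overline{i(D)}$; such a component exists because any sequence $(x_n)\subseteq V$ with $i(x_n) \to \chi_0$ lies eventually in $\{f > 1/2\}$ and, after passing to a subsequence, in a single component. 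Because $f \in D_p(\Gamma)$ gives $\sum_x |Df(x)|^p < \infty$ and thus $|Df(x)| \to 0$ along $V$, outside a finite set $F$ every $y \in \partial D$ adjacent to some $x \in D$ obeys $f(y) \ge f(x) - |Df(x)| > 1/4$. Combined with $f > 1/2$ on $D$, this yields $(D \cup \partial D) \setminus F \subseteq \{f > 1/4\}$; continuity of $\hat f$ on $Sp$ and $f \le 1/8 < 1/4$ on $Sp\setminus U_0$ then give $\overline{i((D \cup \partial D)\setminus F)} \subseteq \{f \ge 1/4\} \subseteq \overline{U_0} \subseteq \overline{U}$. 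The finite $F$ contributes only to $i(V)$, which is disjoint from $R_p(\Gamma)$, so the desired localization $\overline{i(D \cup \partial D)} \cap R_p(\Gamma) \subseteq \overline{U}$ follows.

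For the $D_p$-massiveness of $D$, take the test function $v := \min\{1,\,2(f-1/2)^+\} \in BD_p(\Gamma)$, which satisfies $v = 0$ on $V\setminus D$ (in particular on $\partial D$), $v > 0$ on $D$, $v(\chi_0) \ge 1/2$, and $I_p(v, V) < \infty$. Solve the $p$-harmonic Dirichlet problem on each $D \cap B_n$ with boundary data $v|_{\partial(D\cap B_n)}$ via Theorem~3.11 of~\cite{HoloSoar}, obtaining $p$-harmonic $w_n$ with $0 \le w_n \le 1$, $w_n = 0$ on $\partial D$, and $I_p(w_n, D\cap B_n) \le I_p(v, V)$; extract a subsequential weak $D_p$-limit $w$ as in the proof of Lemma~\ref{gequalinfinity}, yielding a $p$-harmonic function on $D$ vanishing on $\partial D$. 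A comparison-principle argument against $v$ forces $\sup_D w > 0$, so after rescaling $w$ is the inner potential needed to certify $D_p$-massiveness. The main obstacle is precisely this last verification: propagating the positivity of the non-$p$-harmonic test function $v$ (anchored by $v(\chi_0) \ge 1/2$) through the exhaustion so that the $p$-harmonic minimizers do not collapse to zero in the limit, which hinges on the comparison principle together with the control over $v$ along a sequence $i$-converging to $\chi_0$.
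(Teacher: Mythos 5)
Your overall strategy -- localize a $D_p$-massive set $D$ near a hypothetical $\chi_0\in\partial_p(\Gamma)\setminus K$, invoke Proposition \ref{dpmassivefinitelength} to get $\lambda_p(P_D)<\infty$, conclude $P_D\not\subseteq P_\infty$, and derive a contradiction from a path of $P\setminus P_\infty$ whose extreme points land in the forbidden neighborhood -- is close in spirit to the paper's, and your endgame (including the compactness/discreteness argument that $E(\gamma)\neq\emptyset$, and the localization $\overline{i(D\cup\partial D)}\cap R_p(\Gamma)\subseteq\overline{U}$ via $|Df|\to 0$) is sound. But the construction of the $D_p$-massive set has two genuine gaps, and the one you flag yourself is fatal as written. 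You take $D$ to be a component of the superlevel set $\{f>1/2\}$ of a \emph{non-harmonic} function $f\in BD_p(\Gamma)$, and you need $\sup_D w>0$ for the limit $w$ of the $p$-harmonic minimizers $w_n$ with boundary data $v$. The ``comparison-principle argument against $v$'' does not go through: the comparison principle compares $p$-harmonic (or $p$-super/subharmonic) functions, and $v=\min\{1,2(f-1/2)^+\}$ is neither, so there is no inequality $w_n\geq v$ (or anything like it) on $D\cap B_n$; nothing prevents $w\equiv 0$. Indeed, for an arbitrary $f\in BD_p(\Gamma)$ a component of $\{f>1/2\}$ need not be $D_p$-massive at all (it can even be finite), so the anchor $\hat v(\chi_0)\geq 1/2$ must somehow be transferred to $D$, which is your second gap: the claim that some single component of $\{f>1/2\}$ contains a subsequence of a net converging to $\chi_0$ is asserted but not proved -- the sequence could a priori hop through infinitely many components, each visited finitely often.

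The paper avoids both problems by inserting the $p$-Royden decomposition before taking level sets: it writes $f=g+h$ with $g\in B(\overline{C_c(\Gamma)}_{D_p})$ and $h\in BHD_p(\Gamma)$, so that $h=f$ on $\partial_p(\Gamma)$, and then works with $A=\{h>1-\epsilon\}$. Because $h$ is $p$-harmonic, a component $B$ of $A$ is $D_p$-massive (this is exactly the content of the cited proof of Proposition 4.12 of \cite{PulsPMJ}; harmonicity is what keeps the exhaustion minimizers from collapsing), and no ``choice of the right component'' is needed since only $\lambda_p(P_A)<\infty$ is used. The price is that $h$ is no longer pointwise controlled off $\partial_p(\Gamma)$, so the paper cannot localize $\overline{i(B\cup\partial B)}$ inside a neighborhood of $\chi$; instead it closes the argument with Lemma \ref{infinitelength}, which produces a point of $\partial_p(\Gamma)$ in $K_1=\overline{\{\cup_\gamma E(\gamma)\mid\gamma\in P_A\setminus P_\infty\}}\subseteq K$, where $h$ must simultaneously vanish and be $\geq 1-\epsilon$. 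To repair your proof you would either need to prove the massiveness statement for superlevel sets of general $BD_p$ functions accumulating at a harmonic-boundary point (which essentially forces you back through the Royden decomposition), or adopt the paper's route.
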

\begin{proof}
Set $K = \overline{ \{ \cup_{\gamma} E(\gamma) \mid \gamma \in P \setminus P_{\infty} \} }.$ Since our standing assumption is that $\Gamma$ is $p$-hyperbolic, it follows from \cite[theorem 2.1]{NakamuraYamasaki} that $\lambda_p(P) < \infty$. By \cite[Lemma 2.2]{KayYam}, $\lambda_p(P \setminus P_{\infty}) < \infty$. Lemma \ref{infinitelength} tells us $K \cap \partial_p(\Gamma) \neq \emptyset.$ For purposes of contradiction, assume that there exists a $\chi \in \partial_p(\Gamma)$ for which $\chi \notin K$. By Urysohn's lemma there exists a continuous function $f$ on $Sp(BD_p(\Gamma))$ that satisfies the following: $0 \leq f \leq 1, f(\chi) =1$ and $f = 0$ on $K \cap \partial_p(\Gamma)$. By density of $BD_p(\Gamma)$ in $C(Sp(BD_p(\Gamma)))$ we assume $f \in BD_p(\Gamma)$. The $p$-Royden decomposition for $BD_p(\Gamma)$ yields a unique $p$-harmonic function $h$ on $V$ and a unique $g \in B(\overline{C_c(\Gamma)}_{D_p})$ such that $f = g + h$. Theorem 4.8 of \cite{PulsPMJ} shows that $g=0$ on $\partial_p(\Gamma)$. Combining this fact with the maximum principe (\cite[Theorem 4.7]{PulsPMJ}) it follows that $0 < h < 1$ on $V, h(\chi) = 1$ and $h=0$ on $\partial_p(\Gamma) \cap K$. Let
\[ A = \{ x \in V \mid h(x) > 1- \epsilon \}, \]
where $0 < \epsilon < 1$. Let $B$ be a component of $A$. The set $B$ is $D_p$-massive, see the proof of \cite[Proposition 4.12]{PulsPMJ} for a proof of this fact. Let $P_A$ be the family of all one-sided infinite paths in $A$, and let $P_B$ consist of all one-sided infinite paths in $B$. Since $B$ is a $D_p$-massive set, $\lambda_p(P_B) < \infty$ by Proposition \ref{dpmassivefinitelength}. It now follows from \cite[Lemma 2.1]{KayYam} that $\lambda_p(P_A) < \infty$. Set 
\[ K_1 = \overline{ \{ \cup_{\gamma} E(\gamma) \mid \gamma \in P_A \setminus P_{\infty} \} }. \]
Another appeal to Lemma \ref{infinitelength} shows $K_1 \cap \partial_p(\Gamma) \neq \emptyset$, because $\lambda_p(P_A \setminus P_{\infty}) < \infty$. Furthermore, $h = 0$ on $K_1 \cap \partial_p(\Gamma)$ since $K_1 \cap \partial_p(\Gamma) \subseteq K \cap \partial_p(\Gamma).$ However, $h(\gamma) \geq 1 - \epsilon$ for all $\gamma \in P_A$. Thus we obtain the contradiction $h(x) \geq 1 - \epsilon$ for all $x \in K_1$. Therefore, $\partial_p(\Gamma) \subseteq K$, as desired.
\end{proof}

{\em Proof of Theorem \ref{boundchar}}. Let $f \in B(\overline{C_c(\Gamma)}_{D_p})$ and let $a \in V$. Denote by $P_a$ the set of all one-sided infinite paths in $\Gamma$ with initial point $a$. Set
\[   A_{a,f} = \{ \gamma \in P_a \mid f(\gamma) \neq 0 \}. \]
By \cite[Theorem 3.3]{KayYam}, $\lambda_p(A_{a,f}) = \infty$. Also, \cite[Lemma 2.2]{KayYam} tells us $\lambda_p(A_f) = \lambda_p ( \cup_{a \in V} A_{a,f}) = \infty$. The definition of $A_f$ above and $E_f$ below were given in Section \ref{statementresults}. Now Proposition \ref{containedinclosure} says that 
\[ \partial_p(\Gamma) \subseteq E_f.\]
For notational convenience set $F = \cap_f E_f$, where $f$ runs through $B(\overline{C_c(\Gamma)}_{D_p})$. Thus, $\partial_p(\Gamma) \subseteq F$. We now proceed to prove the reverse inclusion. Suppose there exists a $\chi \in F$ for which $\chi \notin \partial_p(\Gamma)$. By \cite[Theorem 4.8]{PulsPMJ} we obtain an $f \in B(\overline{C_c(\Gamma)}_{D_p})$ for which $\chi(f) \neq 0$. Let $\alpha \sim x_0,x_1,\dots,x_n,\dots$ be a one-sided path with $\chi \in \overline{V}(\alpha)$. Because $\chi(f) \neq 0$, there is a subsequence $(x_{n_k})$ of $(x_n)$ that satisfies $\lim_{k \rightarrow \infty} f(x_{n_k}) \neq 0$. Thus $f(\alpha) \neq 0$ and has a result $\alpha \in A_f$. Hence $\chi \notin \{ \cup_{\gamma} E(\gamma) \mid \gamma \in P \setminus A_f \}$. We are assuming $\chi \in E_f$, so it must be the case that there is a sequence $(\chi_n)$ in $\{ \cup_{\gamma} E(\gamma) \mid \gamma \in P\setminus A_f \}$ with $(\chi_n) \rightarrow \chi$. Since $f(\gamma) = 0$ for each $\gamma \in P \setminus A_f$ it follows immediately that $\chi_n (f) = 0$ for each $n \in \mathbb{N}$. This implies $\chi(f) = 0$, contradicting our assumption $\chi(f) \neq 0$. Therefore, $F \subseteq \partial_p(\Gamma)$. The proof of Theorem \ref{boundchar} is now complete.

\bibliographystyle{plain}
\bibliography{charar_fsigma_pharmbound}

\begin{thebibliography}{1}

\bibitem{Gromov}
M.~Gromov.
\newblock Asymptotic invariants of infinite groups.
\newblock In {\em Geometric group theory, Vol.\ 2 (Sussex, 1991)}, volume 182
  of {\em London Math. Soc. Lecture Note Ser.}, pages 1--295. Cambridge Univ.
  Press, Cambridge, 1993.

\bibitem{HoloSoar}
Ilkka Holopainen and Paolo~M. Soardi.
\newblock {$p$}-harmonic functions on graphs and manifolds.
\newblock {\em Manuscripta Math.}, 94(1):95--110, 1997.

\bibitem{KayYam}
Takashi Kayano and Maretsugu Yamasaki.
\newblock Boundary limit of discrete {D}irichlet potentials.
\newblock {\em Hiroshima Math. J.}, 14(2):401--406, 1984.

\bibitem{NakamuraYamasaki}
Tadashi Nakamura and Maretsugu Yamasaki.
\newblock Generalized extremal length of an infinite network.
\newblock {\em Hiroshima Math. J.}, 6(1):95--111, 1976.

\bibitem{PulsPMJ}
Michael Puls.
\newblock Graphs of bounded degree and the {$p$}-harmonic boundary.
\newblock {\em Pacific J. Math.}, 248(2):429--452, 2010.

\bibitem{Puls06}
Michael~J. Puls.
\newblock The first {$L^p$}-cohomology of some finitely generated groups and
  {$p$}-harmonic functions.
\newblock {\em J. Funct. Anal.}, 237(2):391--401, 2006.

\bibitem{Soardibook}
Paolo~M. Soardi.
\newblock {\em Potential theory on infinite networks}, volume 1590 of {\em
  Lecture Notes in Mathematics}.
\newblock Springer-Verlag, Berlin, 1994.

\bibitem{TaylorLay}
Angus~E. Taylor and David~C. Lay.
\newblock {\em Introduction to functional analysis}.
\newblock Robert E. Krieger Publishing Co. Inc., Melbourne, FL, second edition,
  1986.

\bibitem{Yam76}
Maretsugu Yamasaki.
\newblock Parabolic and hyperbolic infinite networks.
\newblock {\em Hiroshima Math. J.}, 7(1):135--146, 1977.

\end{thebibliography}
\end{document}